\theoremstyle{plain}
\newtheorem{theorem}{Theorem}[section]
\newtheorem{example}[theorem]{Example}
\theoremstyle{definition}
 \theoremstyle{definition}
\newtheorem{remark}[theorem]{Remark}
\numberwithin{equation}{section}
\newcommand{\R}{\mathbb R}
\DeclareMathOperator{\supp}{supp}
\DeclareMathOperator*{\essinf}{ess\,inf}
\DeclareMathOperator*{\esssup}{ess\,sup}
\newcommand{\pp}{{p(\cdot)}}
\newcommand{\cpp}{{p'(\cdot)}}
\newcommand{\Lp}{L^{p(\cdot)}}
\newcommand{\Pp}{\mathcal P}
\newcommand{\qq}{{q(\cdot)}}
\DeclareMathAlphabet\EuRoman{U}{eur}{m}{n}
\SetMathAlphabet\EuRoman{bold}{U}{eur}{b}{n}
\renewcommand{\mathsf}{\EuRoman}
\author{D.~Cruz-Uribe, OFS}
\address{Department of Mathematics \\ University of Alabama \\
Tuscaloosa, AL 35487, USA}
\email{dcruzuribe@ua.edu}
\author{A.\ Fiorenza}
\address{Dipartimento di Costruzioni e Metodi Matematici in
Architettura \\
Universit\`a di Napoli \\ Via Monteoliveto, 3 \\
I-80134 Napoli, Italy\\
and Istituto per le Applicazioni del Calcolo
"Mauro Picone", sezione di Napoli \\
Consiglio Nazionale delle Ricerche \\
via Pietro Castellino, 111 \\
 I-80131 Napoli, Italy }
\email{fiorenza@unina.it}
\author{O.~M.~Guzm\'an}
\address{Departamento de Matemáticas\\ Universidad Nacional de Colombia \\ AP360354 Bogotá \\ Colombia}
\email{omguzmanf@unal.edu.co}
\title{Embeddings between grand, small and variable Lebesgue spaces}
\keywords{Banach function spaces, variable Lebesgue spaces, grand
  Lebesgue spaces, small Lebesgue spaces, embeddings}
\subjclass[2010] {46E30}
\thanks{The first author is supported by NSF Grant 1362425 and
  research funds from the Dean of the College of Arts \& Sciences, the
  University of Alabama.  The authors would like to thank the
  anonymous referee for his/her very helpful comments.}
\date{June 18, 2017}
\begin{document}

\begin{abstract}
We give conditions on the exponent function $\pp$ that imply the
existence of
embeddings between the grand, small and variable Lebesgue spaces.  We
construct examples to show that our results are close to optimal.  Our
work extends recent results by the second author, Rakotoson and
Sbordone~\cite{MR3485874}. 
\end{abstract}

\maketitle


\section{Introduction}
\label{sec:intro}

In this paper we consider the relationship between three Banach
function spaces that generalize the classical Lebesgue spaces.  Given
a set $\Omega\subset \R^n$, $|\Omega|=1$, $1<p<\infty$, and
$\theta>0$, the generalized grand Lebesgue space
$L^{p),\theta}(\Omega)$ consists of all measurable functions $f$ such
that
\[ \|f\|_{p),\theta} = \sup_{0<\epsilon<p-1}
  \bigg(\epsilon^\theta\int_\Omega
  |f(x)|^{p-\epsilon}\,dx\bigg)^{\frac{1}{p-\epsilon}}.  \]
When $\theta=0$ this reduces to the Lebesgue space $L^p(\Omega)$.
When $\theta=1$, this becomes the grand Lebesgue space
$L^{p)}(\Omega)$, which was introduced by Iwaniec and
Sbordone~\cite{MR1176362}.  The generalization for $\theta>0$ was
introduced in~\cite{Greco:1997uf} and considered in a more systematic
way in~\cite{MR2110048}.  These spaces have proved very useful in
proving limiting results in the study of partial differential
equations: see~\cite{MR2836342,MR3122331,MR3311355,
  Greco:1997uf,MR3278982}.  

The small Lebesgue space $L^{(p,\theta}$  is defined as the associate space
of $L^{p'),\theta}$, and so has the norm
\[ \|f\|_{(p,\theta} = \sup\bigg\{ \int_\Omega f(x)g(x)\,dx :
  \|f\|_{p'),\theta} \leq 1\bigg\}. \]
An intrinsic expression for the small Lebesgue space norm, when
$\theta=1$, was first
found in~\cite{MR1776829} and for general $\theta>0$
in~\cite{MR2110048}.  These expressions were quite complicated, but
much simpler expressions were found in~\cite{MR2499725,MR2110397}:
\begin{gather}
\label{eqn:grand-norm}
\|f\|_{p),\theta} \approx
\sup_{0<t<1} \log\bigg(\frac{e}{t}\bigg)^{-\frac{\theta}{p}}
\left(\int_t^1 f_*(s)^p\,ds\right)^{\frac{1}{p}} \\
\label{eqn:small-norm}
\|f\|_{(p,\theta} \approx
\int_0^1 \log\left(\frac{e}{t}\right)^{\frac{\theta}{p'}-1}
\left(\int_0^t f_*(s)^p\,ds\right)^{\frac{1}{p}}\frac{dt}{t}.
\end{gather}

 The grand and small Lebesgue spaces are very ``close'' to the space
 $L^p$.  More precisely,  we have for all
$1<p<\infty$ and $\epsilon>0$ that 
\begin{equation} \label{eqn:grand-small-embed}
L^{p+\epsilon}(\Omega) \subsetneq  L^{(p,\theta}(\Omega) \subsetneq L^p(\Omega)
\subsetneq L^{p),\theta}(\Omega) \subsetneq L^{p-\epsilon}(\Omega).
\end{equation}
The parameter $\theta$ controls the ``distance'' of these spaces from
$L^p$: for instance, if $\Omega=[0,1]$, then we have that
\begin{equation} \label{eqn:in-grand}
 \left(\frac{1}{t}\right)^{\frac{1}{p}}\log
  \left(\frac{e}{t}\right)^{\frac{\theta-1}{p}} \in L^{p),\theta},
\end{equation}
and for all non-negative $\epsilon,\,\delta$, $\epsilon+\delta>0$,
\[ \left(\frac{1}{t}\right)^{\frac{1}{p}}\log
  \left(\frac{e}{t}\right)^{-(1+\epsilon)\frac{1}{p}
-(1+\delta)\frac{\theta}{p'}}
  \in L^{(p,\theta}. \]
The first inclusion follows from the proof
of~\cite[Proposition~5.6]{MR2110048}.  The second uses the
proposition itself, which states that for $\beta>1$,
$L^p(\log L)^{\beta\theta(p-1)}\subset L^{(p,\theta}$.  For this
embedding, see also~\cite{Cobos:2012ue}.  

\medskip

The variable Lebesgue spaces generalize the classical Lebesgue spaces
in a different way.  Given a measurable function $p(\cdot) : \Omega
\rightarrow [1,\infty)$, we define $L^\pp(\Omega)$ to be the
collection of all measurable functions such that for some $\lambda>0$, 
\[ \rho(f/\lambda) = \int_\Omega
  \bigg(\frac{|f(x)|}{\lambda}\bigg)^{p(x)}\,dx < \infty.  \]
$\Lp(\Omega)$ becomes a Banach function space with the norm
\[ \|f\|_\pp = \inf\{ \lambda>0 : \rho(f/\lambda)\leq 1 \}.  \]
These spaces were introduced by Orlicz in the 1930s and have been
extensively studied for the past 25 years.
(See~\cite{cruz-fiorenza-book} for more information on their history
and applications.)   If we define
\[ p_- = \essinf_{x\in \Omega} p(x), \quad 
 p_+ = \esssup_{x\in \Omega} p(x), \]
then
\begin{equation} \label{eqn:var-embed}
 L^{p_+}(\Omega) \subset L^\pp(\Omega) \subset L^{p_-}(\Omega). 
\end{equation}
Equality holds if and only if $p_-=p_+$: i.e., when $\pp$ is constant.

\medskip

Given the embeddings~\eqref{eqn:grand-small-embed}
and~\eqref{eqn:var-embed}, it is a natural question to ask if 
the stronger embeddings $L^\pp(\Omega) \subset L^{(p_-,\theta}(\Omega)$ and
$L^{p_+),\theta}(\Omega) \subset L^\pp(\Omega)$ are possible.  Our first
result gives a sufficient condition for these inclusions to hold.

To state it, we  introduce some notation.  Let $\Pp(\Omega)$
denote the set of all measurable exponent functions $\pp :
\Omega\rightarrow [1,\infty)$.   Given $\pp\in \Pp(\Omega)$, let $p_*(\cdot) :
[0,1] \rightarrow [1,\infty)$ denote the decreasing
rearrangement of $\pp$.  More precisely, define the
distribution function
\[ \mu_{p(\cdot)}(t) = |\{ x\in \Omega : p(x)>t \}|, \]
and define the decreasing rearrangement by
\[ p_*(t) = \inf\{ \lambda \ge 0 : \mu_{p(\cdot)}(\lambda) \leq t \}, \]
where the infimum of the empty set is defined to be $+\infty$.  Let
$p^*(\cdot)$ denote the increasing rearrangement, defined by
\[ p^*(t) = - (-p(\cdot))_*(t) = p_*(1-t). \]
Note that if we modify $\pp$ on a set of measure zero, we may assume
without loss of generality that $p_-=p^*(0)=p_*(1)$ and
$p_+=p_*(0)=p^*(1)$.  Moreover, we have that $p_*(t)\rightarrow
p_*(0)$ as $t\rightarrow 0^+$.  

Given $\pp$, $1<p_-\leq p_+<\infty$, define the conjugate exponent
function $\qq=\cpp$ by $\frac{1}{p(t)}+\frac{1}{q(t)}=1$.  By taking
rearrangements we see that 
\[ \frac{1}{p_*(t)}+\frac{1}{q^*(t)}=1, \]
and the same equality holds for $p^*$ and $q_*$.   We also have
that $p_-'=(p_-)' = p^*(0)'=q_*(0)$.  

\begin{theorem} \label{thm:var-small-embed}
Given an exponent $\pp\in \Pp(\Omega)$, $1<p_-\leq p_+ < \infty$, and
$\theta>0$, 
suppose that there exists $0<t_0\leq 1$ and $\epsilon>0$ such that for all
$t\in [0,t_0]$, 
\begin{equation} \label{eqn:var-small-embed1}
\frac{1}{p^*(0)}-\frac{1}{p^*(t)} 
\geq \left(\frac{\theta}{p_-'}+\epsilon\right)
\frac{\log\log(\frac{e}{t})}{\log(\frac{e}{t})}. 
\end{equation}
Then 
\begin{equation}  \label{eqn:var-small-embed2}
 L^\pp(\Omega) \hookrightarrow L^{(p_-,\theta}(\Omega).
\end{equation}
\end{theorem}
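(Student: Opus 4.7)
The plan is to estimate $\|f\|_{(p_-,\theta}$ via the equivalent norm in~\eqref{eqn:small-norm} and reduce the bound to an integral inequality that the hypothesis on $p^*$ is designed to close. By homogeneity, normalize so that $\|f\|_{\pp}=1$; then $\rho(f)=\int_\Omega |f|^{p(x)}\,dx\leq 1$ and, by~\eqref{eqn:var-embed}, $\|f\|_{p_-}\leq C$. It suffices to show
\[
I := \int_0^1 \log\!\left(\frac{e}{t}\right)^{\!\frac{\theta}{p_-'}-1}
\left(\int_0^t f_*(s)^{p_-}\,ds\right)^{\!1/p_-}\frac{dt}{t}\leq C.
\]
The contribution of $t\in(t_0,1)$ is immediate from $(\int_0^t f_*^{p_-})^{1/p_-}\leq\|f\|_{p_-}\leq C$ and the finiteness of $\int_{t_0}^1\log(e/t)^{\theta/p_-'-1}\,dt/t$.

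For $t\in(0,t_0)$ I would apply H\"older's inequality with exponent $p^*(t)/p_-\geq 1$ to the inner integral, obtaining
\[
\left(\int_0^t f_*(s)^{p_-}\,ds\right)^{\!1/p_-}
\leq \left(\int_0^t f_*(s)^{p^*(t)}\,ds\right)^{\!1/p^*(t)} t^{\,1/p_--1/p^*(t)}.
\]
Using the hypothesis~\eqref{eqn:var-small-embed1} together with the identity $\log t/\log(e/t)=-1+1/\log(e/t)$, one readily checks that
\[
t^{\,1/p_--1/p^*(t)}\leq C\,\log\!\left(\frac{e}{t}\right)^{\!-\theta/p_-'-\epsilon}
\]
for $t\in(0,t_0)$. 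This decay is precisely what is needed to make the outer integrand integrable against the weight $\log(e/t)^{\theta/p_-'-1}\,dt/t$, provided the remaining factor $(\int_0^t f_*^{p^*(t)}\,ds)^{1/p^*(t)}$ grows no faster than some small positive power of $\log(e/t)$.

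To control $\int_0^t f_*(s)^{p^*(t)}\,ds$, I use the identity $(|f|^{p^*(t)})_*=f_*^{p^*(t)}$ together with the standard formula $\int_0^t h_*\,ds=\sup_{|E|\leq t}\int_E h\,dx$. Let $A_t=\{x\in\Omega:p(x)\geq p^*(t)\}$; the definition of $p^*$ forces $|A_t^c|\leq t$. For $E$ with $|E|\leq t$ I split $\int_E|f|^{p^*(t)}=\int_{E\cap A_t}+\int_{E\cap A_t^c}$. On $A_t$ the pointwise bound $|f|^{p^*(t)}\leq 1+|f|^{p(x)}$ yields $\int_{A_t}|f|^{p^*(t)}\leq |\Omega|+\rho(f)\leq 2$. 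On the complement, $|E\cap A_t^c|\leq t$, and I would use H\"older's inequality combined with the $L^{p_-}$-control on $f$ and the hypothesis on the gap $p^*(t)-p_-$ to bound this contribution by at most $C\log(e/t)^{\epsilon/2}$. Assembling the pieces gives $I\leq C$, which is~\eqref{eqn:var-small-embed2}.

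The principal obstacle I anticipate is the bound for $\int_{E\cap A_t^c}|f|^{p^*(t)}\,dx$: on $A_t^c$ one has $p(x)<p^*(t)$, so the modular of $f$ no longer pointwise dominates $|f|^{p^*(t)}$, and a naive H\"older argument fails since $f$ is a priori only in $L^{p_-}$, not in $L^{p^*(t)}$. Overcoming this will require a careful variable-exponent H\"older estimate, or a dyadic level-set decomposition of $f$ combined with Chebyshev-type inequalities on $\{|f|>\lambda\}\cap A_t^c$, and it is precisely here that the quantitative condition~\eqref{eqn:var-small-embed1}, which forces $p^*$ to lift off from $p_-$ at the prescribed rate, must be exploited to its full strength.
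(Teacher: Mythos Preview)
Your overall strategy---use the equivalent norm~\eqref{eqn:small-norm}, extract the decay factor $t^{1/p_- - 1/p^*(t)}$ via a H\"older-type inequality, and close with the hypothesis~\eqref{eqn:var-small-embed1}---is exactly the paper's. The genuine gap is the one you yourself flag: the constant-exponent H\"older step with $p^*(t)$ produces the factor $\big(\int_0^t f_*^{p^*(t)}\big)^{1/p^*(t)}$, and this is \emph{not} controlled uniformly over $f$ with $\|f\|_{\pp}=1$. To see why, take $\pp$ satisfying the hypothesis but with $p^*(t)=p_+$ for all $t\ge\delta$ (some small $\delta>0$), and let $f=M\chi_{E_0}$ where $|E_0|=t_1\ll\delta$, $p(x)\approx p^*(t_1/2)$ on $E_0$, and $M=t_1^{-1/p^*(t_1/2)}$; then $\|f\|_\pp\approx 1$, but for every $t\in(\delta,t_0)$ one has $\big(\int_0^t f_*^{p^*(t)}\big)^{1/p^*(t)}=t_1^{1/p_+}M\approx t_1^{1/p_+-1/p_-}$, which is unbounded as $t_1\to 0$. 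Thus no growth bound of the form $\log(e/t)^{\epsilon'}$ can hold uniformly, and the proposed split over $A_t$ and $A_t^c$ cannot recover this loss: on $A_t^c$ you are asking for strictly higher integrability than the modular provides, and nothing in~\eqref{eqn:var-small-embed1} (which is only a \emph{lower} bound on $1/p_- -1/p^*(t)$) prevents the situation above.

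The paper's remedy is precisely the ``careful variable-exponent H\"older estimate'' you anticipate, but applied at the level of norms rather than pointwise exponents. Define $r_*(\cdot)$ by $1/p_- = 1/p^*(\cdot)+1/r_*(\cdot)$ and use the generalized H\"older inequality in variable Lebesgue spaces to write
\[
\|f_*\chi_{(0,t)}\|_{p_-}\ \lesssim\ \|f_*\|_{p^*(\cdot)}\,\|\chi_{(0,t)}\|_{r_*(\cdot)}.
\]
The crucial point is that the first factor is bounded by $\|f\|_{\pp}$ via Theorem~\ref{thm:FRS} (the rearrangement inequality $\|u_*\|_{p^*(\cdot)}\lesssim\|u\|_{\pp}$), so the dependence on $f$ is absorbed once and for all; the second factor then carries the decay $t^{1/p_- -1/p^*(t)}$ and the argument closes exactly as you outlined. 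In other words, the missing ingredient is not a level-set decomposition but the combination of variable-exponent H\"older with the Fiorenza--Rakotoson--Sbordone rearrangement bound.
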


As a consequence of Theorem~\ref{thm:var-small-embed} and the abstract
properties of Banach function spaces we get the second desired
inclusion.

\begin{theorem} \label{thm:grand-var-embed}
Given an exponent $\pp\in \Pp(\Omega)$, $1<p_-\leq p_+ < \infty$, and
$\theta>0$, 
suppose that there exists $0<t_0 \leq 1$ and $\epsilon>0$ such that for all
$t\in [0,t_0]$, 
\begin{equation} \label{eqn:grand-var-embed1}
\frac{1}{p_*(t)}-\frac{1}{p_*(0)} 
\geq \left(\frac{\theta}{p_+}+\epsilon\right)
\frac{\log\log(\frac{e}{t})}{\log(\frac{e}{t})}. 
\end{equation}
Then
\begin{equation} \label{eqn:grand-var-embed2} 
L^{p_+),\theta}(\Omega) \hookrightarrow L^\pp(\Omega). 
\end{equation}
\end{theorem}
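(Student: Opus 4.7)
The plan is to deduce Theorem~\ref{thm:grand-var-embed} from Theorem~\ref{thm:var-small-embed} applied to the conjugate exponent $\cpp$, combined with the standard fact that taking associate spaces reverses continuous embeddings between Banach function spaces.

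First I would verify that hypothesis \eqref{eqn:grand-var-embed1} is nothing but hypothesis \eqref{eqn:var-small-embed1} read for the exponent $\qq=\cpp$. Using the identity $\frac{1}{p_*(t)}+\frac{1}{q^*(t)}=1$ recorded in the introduction, one has
\[
\frac{1}{q^*(0)} - \frac{1}{q^*(t)} = \frac{1}{p_*(t)} - \frac{1}{p_*(0)}.
\]
Moreover $q_-=(p_+)'$, so $q_-'=p_+$. Therefore \eqref{eqn:grand-var-embed1} is exactly hypothesis \eqref{eqn:var-small-embed1} for $\qq$. Applying Theorem~\ref{thm:var-small-embed} to $\qq$ yields
\[
L^\cpp(\Omega) \hookrightarrow L^{(q_-,\theta}(\Omega) = L^{(p_+',\theta}(\Omega).
\]

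Next I would pass to associate spaces. The embedding above means $\|g\|_{(p_+',\theta}\leq C\|g\|_\cpp$ for all admissible $g$; pairing with an arbitrary $f$ and taking the supremum in the associate-space definition gives $\|f\|_{(L^{(p_+',\theta})'}\leq C\|f\|_{(L^\cpp)'}$. Two identifications remain. On the one hand, the small Lebesgue space is defined in the introduction as $L^{(p,\theta}=(L^{p'),\theta})'$, and since the grand Lebesgue space is a Banach function space with the Fatou property one has $(L^{p'),\theta})''=L^{p'),\theta}$; specializing $p$ to $p_+'$ gives $(L^{(p_+',\theta})'=L^{p_+),\theta}$. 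On the other hand, the standard H\"older-type duality for variable Lebesgue spaces with $1<p_-\leq p_+<\infty$ yields $(L^\cpp)'=L^\pp$ with equivalent norms (see \cite{cruz-fiorenza-book}). Substituting these two identifications into the displayed associate-norm inequality produces the desired embedding \eqref{eqn:grand-var-embed2}.

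The only point requiring care is the bookkeeping around associate spaces: one must know that the generalized grand Lebesgue space has the Fatou property so that its second associate collapses to itself, and that the variable Lebesgue duality holds with an equivalence constant depending only on $\pp$. Both facts are standard in the literature; once they are in place the result is a purely formal consequence of Theorem~\ref{thm:var-small-embed}, which is precisely why the authors dispatch the proof by appealing to ``the abstract properties of Banach function spaces.''
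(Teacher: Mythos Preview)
Your proposal is correct and mirrors the paper's own proof almost exactly: the authors also apply Theorem~\ref{thm:var-small-embed} to $\qq=\cpp$ after checking that \eqref{eqn:grand-var-embed1} translates into \eqref{eqn:var-small-embed1} for $\qq$, and then pass to associate spaces using $(L^\cpp)'=L^\pp$ and $(L^{(q_-,\theta})'=(L^{p_+),\theta})''=L^{p_+),\theta}$. Your remarks on the Fatou property and the bookkeeping are precisely the content of the references to \cite{bennett-sharpley88} and \cite{cruz-fiorenza-book} that the paper invokes.
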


We can adapt the proof of Theorem~\ref{thm:var-small-embed} to get
another scale of weaker continuity conditions on the exponent $\pp$ for the desired
embedding to hold.  We will discuss this immediately after the proof:
see Remark~\ref{remark:better-cond} below.  However, the continuity conditions in Theorem~\ref{thm:var-small-embed} and~\ref{thm:grand-var-embed} are 
in some sense sharp, as the next result shows.

\begin{example} \label{example:no-var-small}
Given $\theta>0$, there exists an increasing function
$\pp \in \Pp([0,1])$, $1<p_-\leq p_+<\infty$,  such that for $t\in
[0,e^{-2}]$, 
\begin{equation} \label{eqn:no-var-small}
 \frac{1}{p(0)}-\frac{1}{p(t)} 
\leq \frac{\theta}{p_-'}
\frac{\log\log(\frac{e}{t})}{\log(\frac{e}{t})}, 
\end{equation}
and there exists $f \in L^\pp([0,1])$ such that $f\not\in
L^{(p_-,\theta}([0,1]))$. 
\end{example}

We could also consider the reverse inclusions: for which $\pp$ do we
have that $L^{(p_-,\theta}(\Omega) \subset L^\pp(\Omega)$ or
$L^\pp(\Omega) \subset L^{p_+),\theta}(\Omega)$?  However, while both
inclusions are true if $\pp$ is constant, if $p_-<p_+$, then neither
can hold.  By the same associate space argument as we use to prove
Theorem~\ref{thm:grand-var-embed} below, it suffices to show that the
second inclusion can never hold.  In this case, if $p_-<p_+$, there
exists a set $E\subset \Omega$, $|E|>0$, such that
$p_+(E)=\esssup_{x\in E} p(x)<p_+$.  But then there exists a function
$f$ such that $\supp(f)\subset E$, $f\in L^{p_+(E)}(E)$, and such that
for any $\delta>0$, $f\not\in L^{p_+(E)+\delta}(E)$.  Hence,
$f\in L^\pp(\Omega)$ (see \cite[Corollary~2.50]{cruz-fiorenza-book}),
but not in $L^{p_+),\theta}$: by definition, if
$f\in L^{p_+),\theta}$, then $f\in L^{p_+-\epsilon}$,
$0<\epsilon<p_+-1$.

We can, however, prove a weaker result if we pass to the
``rearranged'' variable exponent spaces considered
in~\cite{MR2373739,MR3485874}.   They showed the following result.

\begin{theorem} \label{thm:FRS}
Given $\pp \in \Pp(\Omega)$, $1<p_-\leq p_+<\infty$,  there exist
constants $c_1,\,c_2>0$ such that for every
$u\in L^\pp(\Omega)$, 
\[ c_1\|u_*\|_{p^*(\cdot)} \leq \|u\|_\pp \leq
  c_2\|u_*\|_{p_*(\cdot)} \leq \infty. \]
\end{theorem}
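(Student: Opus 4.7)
The plan is to prove each of the two inequalities separately by comparing Luxemburg modulars and then invoking the standard modular--norm relation in variable Lebesgue spaces. The key observation is that on the region $\{|u|>1\}$ the integrand $y^s$ is supermodular in $(y,s)$ and a Lorentz-type rearrangement inequality applies; on the complementary region, $|u(x)|^{p(x)}\leq 1$ gives trivial control.

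For the upper bound $\|u\|_\pp\leq c_2\|u_*\|_{p_*(\cdot)}$, I normalize so that $\rho_{p_*(\cdot)}(u_*)=\int_0^1 u_*(t)^{p_*(t)}\,dt\leq 1$; the goal becomes $\rho_\pp(u)\leq C$, which yields $\|u\|_\pp\leq C^{1/p_-}$. Set $E=\{|u|>1\}$ and $t_1=|E|$. On $\Omega\setminus E$ the integrand is at most $1$, contributing at most $|\Omega\setminus E|\leq 1$. On $E$, the direct computation $\partial^2(y^s)/\partial y\,\partial s=y^{s-1}(1+s\log y)\geq 0$ for $y\geq 1$ and $s\geq 1$ shows $F(y,s)=y^s$ is supermodular there, and the rearrangement inequality for supermodular integrands gives
\[
\int_E |u(x)|^{p(x)}\,dx\leq \int_0^{t_1}(u|_E)_*(t)^{(p|_E)_*(t)}\,dt.
\]
Since $|u|>1$ precisely on $E$, $(u|_E)_*(t)=u_*(t)$ on $[0,t_1)$. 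Restricting $p$ to $E$ only shrinks its distribution function, so $(p|_E)_*(t)\leq p_*(t)$; combined with $u_*(t)>1$ this yields $u_*(t)^{(p|_E)_*(t)}\leq u_*(t)^{p_*(t)}$. Hence the contribution from $E$ is at most $\rho_{p_*(\cdot)}(u_*)\leq 1$, giving $\rho_\pp(u)\leq 2$ and $c_2=2^{1/p_-}$.

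For the lower bound $c_1\|u_*\|_{p^*(\cdot)}\leq \|u\|_\pp$ I proceed symmetrically: normalize $\rho_\pp(u)\leq 1$ and seek a bound on $\rho_{p^*(\cdot)}(u_*)$. On $\{u_*\leq 1\}$ the integrand $u_*(t)^{p^*(t)}\leq 1$ and contributes at most $1$. On $[0,t_1)$ where $u_*>1$, I apply the reverse direction of the rearrangement inequality---that the minimum of $\int F(f,g)$ over joint rearrangements is attained when $f$ and $g$ are anti-aligned---to obtain
\[
\int_E |u(x)|^{p(x)}\,dx\geq \int_0^{t_1}u_*(t)^{(p|_E)^*(t)}\,dt.
\]
An elementary distribution-function calculation, using that at most $1-t_1$ units of mass are removed in passing from $p$ to $p|_E$, gives $(p|_E)_*(s)\geq p_*(s+1-t_1)$, hence $(p|_E)^*(t)=(p|_E)_*(t_1-t)\geq p_*(1-t)=p^*(t)$ on $[0,t_1]$. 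Since $u_*(t)>1$ on that interval, $u_*(t)^{p^*(t)}\leq u_*(t)^{(p|_E)^*(t)}$, and the contribution on $[0,t_1)$ is bounded by $\int_E |u|^{p(x)}\,dx\leq 1$. Summing gives $\rho_{p^*(\cdot)}(u_*)\leq 2$, whence $c_1=2^{-1/p_-}$.

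The main obstacle is justifying the supermodular rearrangement inequality for $F(y,s)=y^s$ when it is only locally supermodular on $\{y\geq 1\}\times\{s\geq 1\}$: the classical Lorentz theorem requires global supermodularity, so I would either extend $F$ outside $\{y\geq 1\}$ to a globally supermodular function matching its values on the boundary, or redo the two-point swap argument directly on the subdomain $E$ where both $|u|$ and the exponent exceed $1$. The distribution-function comparisons $(p|_E)_*\leq p_*$ and $(p|_E)^*\geq p^*$ on the relevant intervals are routine and would be packaged as a brief lemma.
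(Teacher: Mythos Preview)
The paper does not give its own proof of Theorem~\ref{thm:FRS}; it is quoted as a known result from~\cite{MR2373739,MR3485874}, so there is no in-paper argument to compare against. Your proposal is in fact the approach taken in those references: the core step is Lorentz's rearrangement inequality for supermodular integrands, applied to $F(y,s)=y^{s}$ on the set $E=\{|u|>1\}$, together with the trivial pointwise bound $|u(x)|^{p(x)}\leq 1$ on the complement and the standard modular--norm comparison $\rho(f)\leq M\Rightarrow \|f\|\leq M^{1/p_-}$ for $M\geq 1$.

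Your bookkeeping is correct. The identity $(u|_E)_*=u_*$ on $[0,t_1)$ is immediate from $\{|u|>\lambda\}\subset E$ for $\lambda\geq 1$. The distribution-function inequality $\mu_{p|_E}(\lambda)\geq \mu_p(\lambda)-(1-t_1)$ yields $(p|_E)_*(s)\geq p_*(s+1-t_1)$ and hence $(p|_E)^*(t)\geq p^*(t)$ on $[0,t_1]$, exactly as you state; the upper comparison $(p|_E)_*\leq p_*$ is trivial. The only point needing a sentence of justification is the one you already flag: $F(y,s)=y^{s}$ is supermodular only for $y\geq 1$. Either observe that on $E$ both $|u|>1$ and $p\geq p_->1$, so the two-point exchange proof of Lorentz's inequality goes through verbatim on $E$, or replace $F$ by the globally supermodular extension $\tilde F(y,s)=\max(y,1)^{s}$, which coincides with $F$ on the values that occur. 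With that remark, your proof is complete and gives explicit constants $c_1=2^{-1/p_-}$, $c_2=2^{1/p_-}$.
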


Note that the last inequality can be an equality:  $u\in L^\pp$ does
not imply $u_*\in
L^{p_*(\cdot)}$:  see~\cite[Remark after proof of Theorem~3]{MR2373739}.   But with
this stronger hypothesis we have the following embedding.

\begin{theorem} \label{thm:rearranged} 
Given $\pp\in \Pp(\Omega)$,
  $1<p_-\leq p_+<\infty$,  and $\theta\geq 1$, suppose there exist  $A \in \mathbb{R}$
  and $0<t_{0}\leqslant 1$ such that for all $t\in [0,t_0]$,

\begin{equation}\label{eqn:rearranged1}
\frac{1}{p_{\ast}(t)}-\frac{1}{p_{\ast}(0)}
\leq
\frac{A}{\log(\frac{e}{t})}
+\frac{\theta-1}{p_{\ast}(0)}\frac{\log\log(\frac{e}{t})}{\log(\frac{e}{t})}.
\end{equation}
Then for all $u\in \Lp(\Omega)$ such that
$u_{\ast}\in L^{p_{\ast}(\cdot)}([0,1])$, $u_{\ast} \in L^{p_+),\theta}([0,1])$.
\end{theorem}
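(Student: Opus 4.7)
The approach is to use the equivalent norm \eqref{eqn:grand-norm} for the grand Lebesgue space. Since $u_*$ is already nonincreasing on $[0,1]$, $u_* \in L^{p_+),\theta}([0,1])$ is equivalent to
\[
\sup_{0<t<1} \log\bigg(\frac{e}{t}\bigg)^{-\theta/p_+}\bigg(\int_t^1 u_*(s)^{p_+}\,ds\bigg)^{1/p_+} < \infty.
\]
By homogeneity I may normalize so that $\|u_*\|_{p_*(\cdot)}\leq 1$, hence $\int_0^1 u_*(s)^{p_*(s)}\,ds \leq 1$.

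First I would establish the pointwise bound $u_*(s) \leq s^{-1/p_*(s)}$ whenever $u_*(s) \geq 1$. Since both $u_*$ and $p_*$ are nonincreasing, $u_*(r)^{p_*(r)} \geq u_*(s)^{p_*(s)}$ for $r \in (0, s)$, so integration over $[0,s]$ gives $s\,u_*(s)^{p_*(s)} \leq 1$. Next, decompose $u_*(s)^{p_+} = u_*(s)^{p_*(s)} \cdot u_*(s)^{p_+ - p_*(s)}$; when $u_*(s) \geq 1$ the second factor is bounded by $s^{1-p_+/p_*(s)}$. Rewriting \eqref{eqn:rearranged1} as
\[
\frac{p_+}{p_*(s)} - 1 \leq \frac{p_+A}{\log(e/s)} + (\theta-1)\frac{\log\log(e/s)}{\log(e/s)},
\]
and using that $x\mapsto s^{-x}$ is nondecreasing for $s \in (0,1]$, I get
\[
s^{1-p_+/p_*(s)} \leq s^{-p_+A/\log(e/s)}\cdot s^{-(\theta-1)\log\log(e/s)/\log(e/s)}.
\]
The identity $-\log s/\log(e/s) = 1 - 1/\log(e/s) \in [0,1]$ then shows the first factor is bounded by $e^{p_+|A|}$ and the second by $\log(e/s)^{\theta-1}$. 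Combining with the trivial bound $u_*(s)^{p_+} \leq 1$ when $u_*(s) < 1$, I obtain, for $s \in (0, t_0]$,
\[
u_*(s)^{p_+} \leq C\log(e/s)^{\theta-1}\,u_*(s)^{p_*(s)} + 1.
\]

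To finish, I integrate from $t$ to $1$, splitting the range as $[t, t_0] \cup [t_0, 1]$. On $[t_0,1]$, $u_*(s)\leq u_*(t_0)\leq t_0^{-1/p_*(t_0)}$, so this interval contributes at most a fixed constant. On $[t,t_0]$, the previous pointwise bound together with the normalized modular inequality yields $\int_t^{t_0} u_*(s)^{p_+}\,ds \leq C\log(e/t)^{\theta-1} + t_0$. After multiplying the total by $\log(e/t)^{-\theta/p_+}$ and taking the $p_+$th root, the worst term behaves like $\log(e/t)^{-\theta/p_+}\cdot \log(e/t)^{(\theta-1)/p_+} = \log(e/t)^{-1/p_+}$, which is uniformly bounded on $(0,1)$; the regime $t \in [t_0, 1]$ is trivially handled. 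The main technical obstacle will be the exponent computation, namely verifying that the $\log\log$ term in \eqref{eqn:rearranged1} translates exactly into the $\log(e/s)^{\theta-1}$ factor that the grand norm requires; everything else is routine bookkeeping.
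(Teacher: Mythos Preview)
Your argument is correct. Both your proof and the paper's start from the same pointwise estimate $u_*(s)\le C s^{-1/p_*(s)}$, derived from the monotonicity of $u_*$ and $p_*$ together with the modular bound. From there the paper takes a slightly shorter route: it multiplies the hypothesis~\eqref{eqn:rearranged1} by $\log(e/t)$, exponentiates, and converts the pointwise bound directly into
\[
u_*(t)\le C\left(\frac{e}{t}\right)^{1/p_*(0)}\log\left(\frac{e}{t}\right)^{(\theta-1)/p_*(0)},
\]
whereupon the conclusion follows immediately from~\eqref{eqn:in-grand}. You instead keep the factor $u_*(s)^{p_*(s)}$ in play, bound $u_*(s)^{p_+-p_*(s)}$ via the same pointwise estimate and~\eqref{eqn:rearranged1}, and then plug into the explicit grand-norm expression~\eqref{eqn:grand-norm}, invoking the modular bound $\int u_*^{p_*(\cdot)}\le 1$ a second time to finish. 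Your approach is marginally longer but fully self-contained, since it does not appeal to the known membership~\eqref{eqn:in-grand}; the paper's is quicker but relies on that outside fact. The exponent computation you flag as the main obstacle is indeed routine and goes through exactly as you outline.
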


\begin{remark}
When $\theta=1$, Theorem~\ref{thm:rearranged} was proved
in~\cite[Theorem~1]{MR3485874}.  Our proof generalizes and simplifies
theirs.    Note that in this case we must assume $A>0$.  
\end{remark}

\begin{remark}
We conjecture that some version of Theorem~\ref{thm:rearranged} is true for
$0<\theta<1$, but we have  not been able to prove it.  Note that for
$\theta<1$ the condition~\eqref{eqn:rearranged1} is never possible:
the lefthand side is positive, but the righthand side is negative for
all $t$ sufficiently close to~$0$.
\end{remark}

\begin{remark}
  We conjecture that if~\eqref{eqn:rearranged1} holds, then a ``dual''
  result holds as well.  More precisely, we conjecture that given any
  decreasing function $u_*\in L^{(q_-,\theta}$, we have
  $u_*\in L^{q^*(\cdot)}$.  However, unlike in the proof of
  Theorem~\ref{thm:grand-var-embed}, we cannot use associativity to
  prove this since we are not dealing with a subspace but rather the
  cone of decreasing functions.  Moreover, our other techniques do not
  seem applicable to this case.
\end{remark}

The condition~\eqref{eqn:rearranged1} is close to optimal as the
following example shows.

\begin{example} \label{example:no-rearrange}
Given $\pp\in \Pp(\Omega)$,
  $1<p_-\leq p_+<\infty$,  suppose there exists
  $\theta>0$, $\epsilon>0$ and $0<t_{0}\leqslant 1$, such that for all
  $t\in [0,t_0]$,
\begin{equation}\label{eqn:no-rearrange1}
\frac{1}{p_{\ast}(t)}-\frac{1}{p_{\ast}(0)}
\geq
\bigg(\frac{\theta +\epsilon}{p_{\ast}(0)}\bigg) 
\frac{\log\log(\frac{e}{t})}{\log(\frac{e}{t})}.
\end{equation}
Then there exist a (decreasing) function $f_*\in L^{p_*(\cdot)}([0,1])
\setminus L^{p_+),\theta}([0,1])$.  
\end{example}

\begin{remark} If we compare the two
  conditions~\eqref{eqn:rearranged1} and~\eqref{eqn:no-rearrange1}  when $\theta=1$, we see that they differ by a factor of
  $\log\log(\frac{e}{t})$.  It is not clear if this gap can be closed
  or if either condition is optimal.
\end{remark}

We can extend Theorem~\ref{thm:rearranged} to the range $0<\theta<1$,
and generalize it for $\theta\geq 1$, if we pass to a larger scale of
spaces.  Given a decreasing function $\sigma_* : [0,1]\rightarrow \R$,
define the function $\varphi : [0,1]\times \R \rightarrow [0,\infty)$
by
\[ \varphi(a,b) = b^{p_*(a)}\log(e+b)^{\sigma_*(a)}. \]
We define the space $L^{\varphi(\cdot)}([0,1])$ to consist of all measurable
functions $f_*$ defined on $[0,1]$ such that for some $\lambda>0$,
\[ \rho_\varphi(f/\lambda) =\int_0^1
  \varphi\left(t,\frac{|f(t)|}{\lambda}\right)\,dt<\infty. \]
With a norm defined as above for the variable Lebesgue spaces,
$L^{\varphi(\cdot)}([0,1])$ becomes a Banach function space, a
particular case of the Musielak-Orlicz spaces, also referred to as
generalized Orlicz spaces.  With this definition of $\varphi$, these
spaces were first considered in~\cite{MR2471932} and were later
considered by other authors:
see~\cite{cruz-fiorenza-book,diening-harjulehto-hasto-ruzicka2010} for
details and further references.  

\begin{theorem} \label{thm:gen-orlicz}
Given $\theta>0$, let $\sigma_*(\cdot) : [0,1]\rightarrow \R$ be a
bounded, decreasing function such that $\sigma_*(0)\geq 1-\theta$.  Suppose
further that there exists $B>0$ and $0<t_0\leq 1$ such that  for $t\in [0,t_0]$,
\begin{equation} \label{eqn:gen-orlicz1}
\sigma_*(0) -\sigma_*(t) \leq \frac{B}{\log\log(\frac{e}{t})}.  
\end{equation}
Given $\pp\in \Pp(\Omega)$, $1<p_-\leq p_+<\infty$, suppose there
exists $A\in \R$ such that for $t\in [0,t_0]$,
\begin{equation} \label{eqn:gen-orlicz2}
 \frac{1}{p_*(t)}-\frac{1}{p_*(0)} 
\leq \frac{A}{\log(\frac{e}{t})} + \frac{\theta-1+\sigma_*(0)}{p_*(0)}
\frac{\log\log(\frac{e}{t})}{\log(\frac{e}{t})}. 
\end{equation}
Let $\varphi(a,b)=b^{p_*(a)}\log(e+b)^{\sigma_*(a)}$.  Then, for all
$u\in L^\pp(\Omega)$ such that $u_*\in L^{\varphi(\cdot)}([0,1])$,
$u_*\in L^{p_+),\theta}([0,1])$.
\end{theorem}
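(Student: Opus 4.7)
The plan is to use the equivalent norm from \eqref{eqn:grand-norm} and reduce the conclusion $u_*\in L^{p_+),\theta}([0,1])$ to the estimate
\[
\int_t^1 u_*(s)^{p_+}\,ds \leq C\log\bigg(\frac{e}{t}\bigg)^\theta
\]
for $t$ small. After normalizing so that $\int_0^1 \varphi(s,u_*(s))\,ds\leq 1$, I would first derive a pointwise bound on $u_*$. Since $p_*$ and $\sigma_*$ are decreasing, and $b\geq 1$ together with $\log(e+b)\geq 1$ makes both $b^{p_*(a)}$ and $\log(e+b)^{\sigma_*(a)}$ decreasing in $a$, the function $a\mapsto \varphi(a,b)$ is decreasing in $a$ for each $b\geq 1$. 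Combined with the monotonicity of $\varphi(a,\cdot)$ and of $u_*$, this shows $\varphi(r,u_*(r))\geq \varphi(s,u_*(s))$ for $0<r\leq s$ whenever $u_*(s)\geq 1$ (the complementary set contributes at most a bounded amount to the integral). Integrating over $[0,s]$ yields the key pointwise bound
\[
u_*(s)^{p_*(s)}\log(e+u_*(s))^{\sigma_*(s)} \leq \frac{1}{s}.
\]

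Next, let $V(s)$ be the largest positive solution of $V^{p_*(s)}\log(e+V)^{\sigma_*(s)}=1/s$; since the left side tends to $\infty$ with $V$, this is well defined for $s$ small, and $u_*(s)\leq V(s)$. Taking logarithms in the defining equation and iterating (noting that $\log V$ dominates $\log\log V$) yields
\[
c\log\bigg(\frac{e}{s}\bigg)\leq \log(e+V(s)) \leq C\log\bigg(\frac{e}{s}\bigg)
\]
for $s$ in a neighborhood of $0$. Setting $\alpha(s):=p_+/p_*(s)-1$ and raising the defining equation of $V$ to the $(1+\alpha(s))$-th power, then applying the appropriate bound on $\log(e+V(s))$ (the lower bound when $\sigma_*(s)\geq 0$, the upper bound when $\sigma_*(s)<0$) gives uniformly in the sign,
\[
V(s)^{p_+} \leq \frac{C}{s^{1+\alpha(s)}}\log\bigg(\frac{e}{s}\bigg)^{-\sigma_*(s)(1+\alpha(s))}.
\]

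Now I would bring in the two hypotheses. Since $p_*(0)=p_+$, condition \eqref{eqn:gen-orlicz2} gives $\alpha(s)\log(e/s) \leq p_+ A + (\theta-1+\sigma_*(0))\log\log(e/s)$, hence $s^{-\alpha(s)}\leq C\log(e/s)^{\theta-1+\sigma_*(0)}$, and combining with the previous step,
\[
u_*(s)^{p_+}\leq V(s)^{p_+} \leq \frac{C}{s}\log\bigg(\frac{e}{s}\bigg)^{(\theta-1)+\delta(s)},
\qquad \delta(s):=\sigma_*(0)-\sigma_*(s)(1+\alpha(s)).
\]
The crucial observation is that, by \eqref{eqn:gen-orlicz1} and the bound on $\alpha(s)$,
\[
\delta(s) \leq [\sigma_*(0)-\sigma_*(s)] + \|\sigma_*\|_\infty\,\alpha(s) \leq \frac{B}{\log\log(e/s)} + O\!\bigg(\frac{\log\log(e/s)}{\log(e/s)}\bigg),
\]
so $\delta(s)\log\log(e/s)\leq B+o(1)$, which makes $\log(e/s)^{\delta(s)}$ \emph{bounded} for $s$ small. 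Therefore
\[
\int_t^{t_0} u_*(s)^{p_+}\,ds \leq C\int_t^{t_0}\frac{\log(e/s)^{\theta-1}}{s}\,ds \leq C'\log\bigg(\frac{e}{t}\bigg)^\theta,
\]
while $\int_{t_0}^1 u_*(s)^{p_+}\,ds$ is trivially bounded, completing the proof.

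The main technical obstacle is the sign-sensitive passage to the envelope $V$ and, in particular, obtaining the lower bound $\log(e+V(s))\geq c\log(e/s)$ in the range $\sigma_*(s)\geq 0$, since the naive estimate $\log(e+u_*)\geq 1$ would be too weak to control the factor $\log(e+V)^{-\sigma_*(s)(1+\alpha(s))}$ there. The condition \eqref{eqn:gen-orlicz1} on the modulus of continuity of $\sigma_*$ is precisely what is needed to make the correction $\log(e/s)^{\delta(s)}$ a bounded multiplicative factor rather than a growing one; a weaker condition would inflate the bound by a positive power of $\log(e/t)$ and destroy the conclusion.
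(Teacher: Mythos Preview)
Your proof is correct and follows essentially the same route as the paper: both derive the pointwise bound $\varphi(t,u_*(t))\lesssim 1/t$ from monotonicity, invert it to obtain $u_*(t)\lesssim (e/t)^{1/p_*(t)}\log(e/t)^{-\sigma_*(t)/p_*(t)}$, and then use hypotheses~\eqref{eqn:gen-orlicz1}--\eqref{eqn:gen-orlicz2} to reduce this to the explicit majorant $(e/t)^{1/p_+}\log(e/t)^{(\theta-1)/p_+}$ known to lie in $L^{p_+),\theta}$. Your treatment of the correction term $\delta(s)=\sigma_*(0)-\sigma_*(s)(1+\alpha(s))$ is in fact a bit more careful with signs than the paper's, which asserts that the exponent $\sigma_*(t)\big(1/p_*(0)-1/p_*(t)\big)$ is negative without addressing the case $\sigma_*(t)<0$; your bound $|\sigma_*(s)|\alpha(s)=O(\log\log(e/s)/\log(e/s))$ covers both signs uniformly.
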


\begin{remark}
  If $\sigma_*(\cdot)\equiv 0$, then Theorem~\ref{thm:gen-orlicz}
  reduces to Theorem~\ref{thm:rearranged}.
  Theorem~\ref{thm:gen-orlicz} is a more general result:  for example, when
  $\theta>1$, if $\sigma_*(\cdot)\equiv 1-\theta$, then
  $L^{p_*(\cdot)}([0,1]) \subsetneq
  L^{\varphi(\cdot)}([0,1])$. (See~\cite[Chapter II.8]{MR724434}.)
\end{remark}

\medskip

The proofs of our results are given in the next section.  Throughout,
our notation is standard; for variable Lebesgue spaces we follow the
notation established in~\cite{cruz-fiorenza-book}.   Constants
$C,c,\ldots$ may vary in value from line to line.
If we write $A\lesssim B$, we mean that
there exists a constant $c$ such that $A\leq cB$; the constant $c$ can
depend on the exponent function $\pp$ and other fixed parameters, but
it does not depend on any variables in functions or summations.  If
$A\lesssim B$ and $B\lesssim A$, we write $A\approx B$.

\section{Proofs of Results}
\label{sec:proofs}

\begin{proof}[Proof of Theorem~\ref{thm:var-small-embed}]
Define the exponent $r_{\ast}(\cdot)\in \Pp([0,1])$ by
 \begin{equation*}
 \frac{1}{p^*(0)}=\frac{1}{p_{-}}=\frac{1}{p^{\ast}(t)}+\frac{1}{r_{\ast}(t)}.
 \end{equation*}
 Fix $f\in \Lp(\Omega)$.  By the generalized Hölder inequality in the
 scale of the variable Lebesgue
 spaces~\cite[Corollary~2.28]{cruz-fiorenza-book},
 inequality~\eqref{eqn:small-norm}, and the first inequality in
 Theorem~\ref{thm:FRS}, we have

\begin{multline*}
\norm{f}_{(p_{-},\theta}
\lesssim
\int_{0}^{1}{\norm{f_{\ast}\chi_{(0,t)}}_{p_{-}}\frac{dt}{t\log(\frac{e}{t})^{1-\frac{\theta}{p^{\prime}}}}}\\ 
 \lesssim
 \int_{0}^{1}{\norm{f_{\ast}}_{p^{\ast}(\cdot)}\norm{\chi_{(0,t)}}_{r_{\ast}(\cdot)}\frac{dt}{t\log(\frac{e}{t})^{1-\frac{\theta}{p^{\prime}}}}}
 \lesssim
 \norm{f}_{p(\cdot)}\int_{0}^{1}{\norm{\chi_{(0,t)}}_{r_{\ast}(\cdot)}\frac{dt}{t\log(\frac{e}{t})^{1-\frac{\theta}{p^{\prime}}}}}. 
\end{multline*}

Observe that since $t\leq 1$, by~\cite[Corollorary~2.23]{cruz-fiorenza-book}
\begin{equation*}
 \norm{\chi_{(0,t)}}_{r_{\ast}(\cdot)} \leqslant t^{\frac{1}{r_{\ast}(t)}}.
\end{equation*} 
Further, if we exponentiate  our
hypothesis~\eqref{eqn:var-small-embed1}, for $t\in [0,t_0]$ we get
\begin{equation} \label{eqn:hyp-exp}
 \left(\frac{e}{t}\right)^{\frac{1}{p^*(0)}-\frac{1}{p^*(t)}}
\geq \log\left(\frac{e}{t}\right)^{\frac{\theta}{p_-'}+\epsilon}.  
\end{equation}
Therefore, we have that 
\begin{multline*}
\norm{f}_{p(\cdot)}\int_{0}^{1}{\norm{\chi_{(0,t)}}_{r_{\ast}(\cdot)}\frac{dt}{t\log(\frac{e}{t})^{1-\frac{\theta}{p_-^{\prime}}}}} 
\leq 
\norm{f}_{p(\cdot)}\int_{0}^{1}{\frac{t^{\frac{1}{r_{\ast}(t)}}}{t\log(\frac{e}{t})^{1-\frac{\theta}{p_-^{\prime}}}}\,dt} \\
= \norm{f}_{p(\cdot)}\int_{0}^{1}{\frac{t^{\frac{1}{p^*(0)}-\frac{1}{p^{\ast}(t)}}}{t\log(\frac{e}{t})^{1-\frac{\theta}{p_-^{\prime}}}}\,dt}
\lesssim
\norm{f}_{p(\cdot)}\int_{0}^{1}{\frac{dt}{t\log(\frac{e}{t})^{1+\epsilon}}}
\lesssim \norm{f}_{p(\cdot)};
\end{multline*}
for the second to last inequality we use~\eqref{eqn:hyp-exp} for $t\in
[0,t_0]$; for $t\in [t_0,1]$ we use that all of these functions are
bounded and bounded away from $0$.  Combining these two inequalities, we see
that~\eqref{eqn:var-small-embed2} holds.
\end{proof}

\begin{remark} \label{remark:better-cond}
If we replace the hypothesis~\eqref{eqn:var-small-embed1} with the
weaker assumption that for some $\epsilon>0$ and $t\in [0,t_0]$,
\[ \frac{1}{p^*(0)}-\frac{1}{p^*(t)} \geq 
\frac{\theta}{p_-'}\frac{\log\log(\frac{e}{t})}{\log(\frac{e}{t})}
+(1+\epsilon) \frac{\log\log\log(\frac{e}{t})}{\log(\frac{e}{t})}, \]
then the above argument goes through with almost no change, except
that in the final inequality the last integral becomes
\begin{equation} \label{eqn:weaker}
\int _0^1
  \frac{dt}{t\log(\frac{e}{t})\log\log(\frac{e}{t})^{1+\epsilon}}
< \infty. 
\end{equation}
Note, however, that if we take $\epsilon=0$ in \eqref{eqn:weaker}, we
do not recapture the sharp endpoint
condition~\eqref{eqn:no-var-small}.

Even weaker sufficient conditions can be found by finding larger
functions that are still in $L^1([0,1])$ and working backwards through
the proof.  Details are left to the interested reader.
\end{remark}

\medskip

\begin{proof}[Proof of Theorem~\ref{thm:grand-var-embed}]
This is an immediate consequence of
Theorem~\ref{thm:var-small-embed}  and the abstract properties of
Banach function spaces.  Given $\pp$, recall that we let $\qq=\cpp$ be
the dual exponent.   Suppose~\eqref{eqn:grand-var-embed1} holds; then
a straightforward calculation shows that~\eqref{eqn:var-small-embed1}
holds for $\qq$.  Hence, we have that $L^\qq(\Omega) \subset
L^{(q_-,\theta)}(\Omega)$.  

Given a Banach function space $X$, let $X'$
denote its associate space.  Then $L^\qq(\Omega)'= L^\pp(\Omega)$
(\cite[Proposition~2.37]{cruz-fiorenza-book}) and since
$(L^{p_+),\theta})'= L^{(p_+',\theta}=L^{(q_-,\theta}$, we have that 
$(L^{(q_-,\theta)})'=(L^{p_+),\theta})''=L^{p_+),\theta}$
(\cite[Theorem~2.7]{bennett-sharpley88}).  By
\cite[Proposition~2.10]{bennett-sharpley88}, if $Y$ is another Banach
function space such that $X\subset Y$, then $Y'\subset X'$.  Hence, we
get~\eqref{eqn:grand-var-embed2} as desired. 
\end{proof}


\begin{proof}[Construction of Example~\ref{example:no-var-small}]

For $t\in [0,e^{-2}]$, define
\[ p(t)=2+2\theta\frac{\log\log(\frac{e}{t})}{\log(\frac{e}{t})}. \]
(Note that we could replace $2$ by any value $q>1$; however, for
clarity we will restrict ourselves to this special case.)
Then we have that $p(0)=p_-=2$ and $\pp$ is increasing.  To get an
increasing exponent function on $[0,1]$ we can extend $\pp$ to be
constant on $[e^{-2},1]$.    A straightforward calculation shows that
for $t\in [0,e^{-2}]$, $\pp$ satisfies~\eqref{eqn:no-var-small}:

\[ \frac{1}{p(0)}-\frac{1}{p(t)}
= \frac{\theta}{2}\frac{\log\log(\frac{e}{t})}{\log(\frac{e}{t})+\log\log(\frac{e}{t})}\\
\leqslant \frac{\theta}{p_-'}\frac{\log\log(\frac{e}{t})}{\log(\frac{e}{t})}.
\]

Now fix $1<b<2$ and for $t\in [0,1/e^{2}] $ define
\begin{equation*}
f(t)=\sum_{j=2}^{\infty}{a_{j}\chi_{(e^{-j-1},e^{-j}]}(t)}, \qquad 
a_{j}=\left[ \frac{e^{j}}{j\log(j)^{b}}\right]^{\frac{1}{2}\frac{j+1}{j+1+\theta\log(j+2)}}.
\end{equation*}
Extend $f$ to be constant on $[e^{-2},1]$; then $f$ is a decreasing
function on $[0,1]$.  We will first show that
$f\in L^\pp([0,1])$ and then show that
$f\not\in L^{(p_-,\theta}([0,1])$.  We first estimate $f$ on
$[0,e^{-2}]$:
\begin{align*}
\int_{0}^{e^{-2}}{f(t)^{p(t)}dt}
&=\sum_{j=2}^{\infty}{\int_{e^{-j-1}}^{e^{-j}}{a_{j}^{2+\frac{2\theta\log\log(\frac{e}{t})}{\log(\frac{e}{t})}}dt}}\\
&\leqslant
  \sum_{j=2}^{\infty}\int_{e^{-j-1}}^{e^{-j}} a_{j}^{2+\frac{2\theta \log(j+2)}{j+1}}dt\\
& \lesssim
  \sum_{j=2}^{\infty}e^{-j}
a_{j}^{2+\frac{2\theta  \log(j+2)}{j+1}}\\
& = \sum_{j=2}^{\infty}{\frac{1}{j\log (j)^{b}}}< \infty.
\end{align*}
Since $f$ and $\pp$ are constant on $[e^{-2},1]$, it follows that $f
\in L^{p(\cdot)}([0,1])$. (This follows from the definition of the norm
since $p_+<\infty$.)

\medskip

We now prove that $f\not\in L^{(p_-,\theta}([0,1])$.   Fix
$t \in (0,\frac{1}{e^{2}}]$; there exists $j>2$ such that
$e^{-j}<t \leq e^{-j+1}$.  But then we have that

\begin{align*}
  \norm{f\chi_{(0,t)}}_{2}^{2}
  &=    \int_{0}^{t}\left(\sum_{k=2}^{\infty}
    a_{k}\chi_{(e^{-k-1},e^{-k}]}(s) \right)^{2}\,ds\\ 
  &\geqslant \sum_{k=2}^{\infty}{\int_{0}^{e^{-j}}{a_{k}^{2}\chi_{(e^{-k-1},e^{-k}]}(s)}}\,ds\\
  &= \sum_{k=j}^{\infty}{\int_{e^{-k-1}}^{e^{-k}}{a_{k}^{2}\,ds}}\\
  &\gtrsim \sum_{k=j}^{\infty}{a_{k}^{2}e^{-k}}\\
  & = \sum_{k=j}^{\infty}
\frac{e^{-\frac{\theta k \log(k+2)}{k+1+\theta\log(k+2)}}}
{k^{1-\frac{\theta\log(k+2)}{k+1+\theta\log(k+2)}}
\log(k)^{\frac{b(k+1)}{k+1+\theta\log(k+2)}}} \\
& = \sum_{k=j}^{\infty}
\frac{e^{-\theta\log(k+2)}
e^{\frac{\theta\log(k+2)+\theta^2\log(k+2)^2}{k+1+\theta\log(k+2)}}}
{k^{1-\frac{\theta\log(k+2)}{k+1+\theta\log(k+2)}}
\log(k)^{\frac{b(k+1)}{k+1+\theta\log(k+2)}}} \\
& \gtrsim  \sum_{k=j}^{\infty}
\frac{1}{k^{1+\theta-\frac{\theta\log(k+2)}{k+1+\theta\log(k+2)}}
\log(k)^{b-\frac{b\theta\log(k+2)}{k+1+\theta\log(k+2)}}} \\
& \gtrsim  \sum_{k=j}^{\infty}
\frac{1}{k^{1+\theta}\log(k)^b}.
\end{align*}

To estimate the final sum we will compare it to the corresponding
integral.  For $x\geq 2$,
\[
(\theta+b)\int_x^\infty \frac{dt}{t^{1+\theta}\log(t)^b}
\geq \int_x^\infty 
\left(\frac{\theta}{t}+\frac{b}{t\log(t)}\right)
\frac{1}{t^{1+\theta}\log(t)^b}\,dt
=\frac{1}{x^\theta\log(x)^b}. \]
    
Combining these inequalities we get that
\[
 \norm{f\chi_{(0,t)}}_{2}^{2}
\gtrsim  \sum_{k=j}^{\infty} \frac{1}{k^{1+\theta}\log(k)^b}
\gtrsim \frac{1}{j^\theta \log(j)^b}. \]

Therefore, by~\eqref{eqn:small-norm} we have that
\begin{align*}
\norm{f}_{(p_-,\theta}
& \gtrsim\int_{0}^{e^{-2}}
\norm{f\chi_{(0,t)}}_{2}\frac{1}{t\log(\frac{e}{t})^{1-\frac{\theta}{2}}}\,dt\\
&=
  \sum_{j=3}^{\infty}{\int_{e^{-j}}^{e^{-j+1}}{\norm{f\chi_{(0,t)}}_{2}
\frac{1}{t\log(\frac{e}{t})^{1-\frac{\theta}{2}}}}}\,dt\\
&\gtrsim  \sum_{j=3}^{\infty}\int_{e^{-j}}^{e^{-j+1}}
\frac{1}{j^{\frac{\theta}{2}}\log(j)^{\frac{b}2}}
\frac{1}{e^j\log(e^{j+1})^{1-\frac{\theta}{2}}}\,dt \\
& \gtrsim  \sum_{j=3}^{\infty}
\frac{1}{j\log(j)^{\frac{b}{2}}} 
 = \infty.
\end{align*}
The last sum is infinite because $1<b<2$.  
Thus, we have that  $f \not\in L^{(p_-,\theta}([0,1])$.
\end{proof}


\begin{proof}[Proof of Theorem~\ref{thm:rearranged}]
If we rearrange~\eqref{eqn:rearranged1} as 
\begin{equation*}
\left(\frac{1}{p_{\ast}(t)}-\frac{1}{p_{\ast}(0)}\right)\log\left(\frac{e}{t}\right)
\leqslant A+\frac{\theta-1}{p_{\ast}(0)}\log\log\left(\frac{e}{t}\right)
\end{equation*}
and exponentiate, we get that for $0<t\leq t_0$, 
\begin{equation*}
\left(\frac{e}{t}\right)^{\frac{1}{p_{\ast}(t)}-\frac{1}{p_{\ast}(0)}}
\leq e^{A}\log\left(\frac{e}{t}\right)^{\frac{\theta-1}{p_{\ast}(0)}}.
\end{equation*}

Since $u_*\in L^{p_*(\cdot)}([0,1])$, by the proof of
\cite[Theorem~1]{MR3485874} we have that for $0\leq t\leq 1$, 
\[u_{\ast}(t) \leqslant
  C\left(\frac{e}{t}\right)^{\frac{1}{p_{\ast}(t)}}.  \]
Therefore, if we combine these two inequalities, we get that for
$0<t\leq t_0$, 
\[ u_{\ast}(t)
  \leqslant C\left(\frac{e}{t}\right)^{\frac{1}{p_{\ast}(0)}}
\log\left(\frac{e}{t}\right)^{\frac{\theta-1}{p_{\ast}(0)}}. \]
Since $u_*(t)$ is bounded and the righthand side is bounded away from
$0$ for $t_0\leq t\leq 1$, the same inequality holds (with a possibly
larger constant $C$) for all $t$.  Therefore, 
by~\eqref{eqn:in-grand} we have that
$u_{\ast}(t)\in L^{p_+),\theta}([0,1])$. 
\end{proof}


\begin{proof}[Construction of Example~\ref{example:no-rearrange}]
Define the function $f_*$ on $[0,t_0]$, $0<t_0<1$, by
\begin{equation*}
f_*(t)=\left[\frac{\log\log(\frac{e}{t})}{t\log(\frac{e}{t})^{1-\theta}}\right]^{\frac{1}{p_{\ast}(0)}}.
\end{equation*}
Note that for all $\theta>0$ there exists $t_0>0$ such that $f_*$ is
decreasing on $[0,t_0]$; extend $f_*$ to be constant on $[t_0,1]$ to
get a decreasing
function on $[0,1]$.  Without loss of generality we may assume that
this is the same $t_0$ as in the hypotheses.  

We will first show that $f_{\ast} \notin L^{p_+),\theta}([0,1])$
and then prove that $f_{\ast}\in L^{p_{\ast}(\cdot)}([0,1])$.  
We may assume that  $t_{0}$ is small enough that there exists $c>0$
so that for $t \in [0,t_0]$, 
\begin{equation}
\frac{c(1+\log\log(\frac{e}{t}))}{t\log(\frac{e}{t})^{1-\theta}}
\leqslant \frac{\log\log(\frac{e}{t})}{t\log(\frac{e}{t})^{1-\theta}},
\end{equation}
But then for $t<t_0$ sufficiently close to $0$,
\begin{multline*}
\int_{t}^{1}{f_*(s)^{p_{\ast}(0)}ds}
\geqslant \int_{t}^{t_{0}}{\frac{\log\log(\frac{e}{s})}{s\log(\frac{e}{s})^{1-\theta}}ds}
\geqslant \int_{t}^{t_{0}}{\frac{c(1+\log\log(\frac{e}{s}))}{s\log(\frac{e}{s})^{1-\theta}}}\\
=
c\log\log\left(\frac{e}{t}\right)\log\left(\frac{e}{t}\right)^{\theta}
- c\log\log\left(\frac{e}{t_0}\right)\log\left(\frac{e}{t_0}\right)^{\theta}
\geq c\log\log\left(\frac{e}{t}\right)\log\left(\frac{e}{t}\right)^{\theta}.
\end{multline*}
Hence, for all $t\in [0,1]$, 
\begin{equation*}
\log\left(\frac{e}{t}\right)^{-\theta}\int_{t}^{1}{f(s)^{p_{\ast}(0)}ds}\geqslant C\log\log\left(\frac{e}{t}\right),
\end{equation*}
and so by~\eqref{eqn:grand-norm},
$f_{\ast}(0)\notin L^{p_{+),\theta}}$.\\

\medskip
 
To prove that $f_{\ast}\in L^{p_{\ast}(\cdot)}([0,1])$, first note that
if we rearrange and exponentiate~\eqref{eqn:no-rearrange1}, we get
\[
\left(\frac{e}{t}\right)^{\frac{p_{\ast}(t)}{p_{\ast}(0)}}
\leq 
\left(\frac{e}{t}\right)\log\left(\frac{e}{t}\right)^{-\frac{p_{\ast}(t)}{p_{\ast}(0)}(\theta+\epsilon)}.
\]
Since $p_*(t)\rightarrow p_*(0)$ as $t\rightarrow 0$, if necessary by
taking $t_0>0$ smaller, we may assume that there exist
$0<\sigma<\tau<\epsilon$ such that for all $t\in
[0,t_0]$,
\[ \frac{p_*(t)}{p_*(0)}(1+\epsilon-\tau) > 1+\sigma. \]

Given these two inequalities we can estimate as follows:
 \begin{multline*}
 \int_{0}^{t_0}f_*(t)^{p_{\ast}(t)}\,dt 
=
\int_{0}^{t_0} \bigg[\frac{\log\log(\frac{e}{t})}{t\log(\frac{e}{t})^{1-\theta}}\bigg]
^{\frac{p_{\ast}(t)}{p_{\ast}(0)}}\,dt
\lesssim
\int_{0}^{t_0}\bigg[\frac{\log(\frac{e}{t})^\tau}{t\log\left(\frac{e}{t}\right)^{1-\theta}}
\bigg]^{\frac{p_{\ast}(t)}{p_{\ast}(0)}}\,dt\\
\lesssim \int_{0}^{t_0}\frac{dt}{t\log\left(\frac{e}{t}\right)
^{\frac{p_{\ast}(t)}{p_{\ast}(0)}(\theta+\epsilon+1-\theta-\tau)}}
\leq \int_{0}^{t_0}\frac{dt}{t\log\left(\frac{e}{t}\right)
^{1+\sigma}}<\infty. 
\end{multline*}
Given this, and since $f_*$ is bounded on $[t_0,1]$,
 we conclude that $f_{\ast}\in L^{p_{\ast}(\cdot)}([0,1])$.
 \end{proof}


\begin{proof}[Proof of Theorem~\ref{thm:gen-orlicz}]
  We begin by generalizing an argument in the proof
  of~\cite[Theorem~1]{MR3485874}.    First note that for $b>1$, $\varphi(a,b)$ is
  decreasing in $a$, and that for fixed $a$ it is increasing in
  $b$ for all $b$ sufficiently large.  Second, we may assume without loss of
  generality that $u_*$ is unbounded, since otherwise the desired
  inclusion holds trivially.  But then there exists
  $0<t_0\leq 1$ such that for all $0<s\leq t\leq t_0$,
  $u_*(s)\geq 1$ and  $\varphi(t,u_*(s))$ is decreasing in $s$.   Therefore, since
  $u_*\in L^\varphi([0,1])$, there exists a constant $c>0$ such that for all such $t$,
\begin{multline*} 
c\geq \int_0^t u_*(s)^{p_*(s)}\log(e+u_*(s))^{\sigma_*(s)}\,ds \\
\geq \int_0^t u_*(s)^{p_*(t)}\log(e+u_*(s))^{\sigma_*(t)}\,ds
\geq tu_*(t)^{p_*(t)}\log(e+u_*(t))^{\sigma_*(t)}. 
\end{multline*}

We want to rearrange this inequality to dominate $u_*(t)$.  If we fix
$a$, then the inverse of $\varphi(a,b)$ as a function of $b$ is
\[ \varphi^{-1}(a,b) \approx b^{\frac{1}{p_*(a)}}
\log(e+b)^{-\frac{\sigma_*(a)}{p_*(a)}}. \]
The implicit constants depend on $p_*(a)$ and $\sigma_*(a)$.
Therefore, since these functions are bounded, we have that there
exists an absolute constant $C$ such that 
\[ u_*(t) \leq C\left(\frac{c}{t}\right)^{\frac{1}{p_*(t)}}
\log\left(e+\frac{c}{t}\right)^{-\frac{\sigma_*(t)}{p_*(t)}}
\leq C\left(\frac{e}{t}\right)^{\frac{1}{p_*(t)}}
\log\left(\frac{e}{t}\right)^{-\frac{\sigma_*(t)}{p_*(t)}}. \]

By possibly taking $t_0$ closer to $0$, we have, by rearranging and
exponentiating \eqref{eqn:gen-orlicz1} and \eqref{eqn:gen-orlicz2},
that for all $t\in [0,t_0]$,
\[ \log\left(\frac{e}{t}\right)^{\sigma_*(0)-\sigma_*(t)} \leq e^B, \]
and
\[ \left(\frac{e}{t}\right)^{\frac{1}{p_*(t)}-\frac{1}{p_*(0)}}
\leq e^A \log\left(\frac{e}{t}\right)^{\frac{\theta-1+\sigma_*(0)}{p_*(0)}}. \]
If we combine these three inequalities, we see that for $t\in
[0,t_0]$,
\begin{align*}
u_*(t) 
& \leq C\left(\frac{e}{t}\right)^{\frac{1}{p_*(t)}}
\log\left(\frac{e}{t}\right)^{-\frac{\sigma_*(t)}{p_*(t)}} \\
& \leq  C\left(\frac{e}{t}\right)^{\frac{1}{p_*(0)}}
\log\left(\frac{e}{t}\right)
^{\frac{\theta-1}{p_*(0)}+\frac{\sigma_*(0)}{p_*(0)}-\frac{\sigma_*(t)}{p_*(t)}}
  \\
& \leq  C\left(\frac{e}{t}\right)^{\frac{1}{p_*(0)}}
\log\left(\frac{e}{t}\right)
^{\frac{\theta-1}{p_*(0)}} 
\log\left(\frac{e}{t}\right)
^{\frac{\sigma_*(0)}{p_*(0)}-\frac{\sigma_*(t)}{p_*(0)}}
\log\left(\frac{e}{t}\right)
^{\frac{\sigma_*(t)}{p_*(0)}-\frac{\sigma_*(t)}{p_*(t)}}\\
& \leq  C\left(\frac{e}{t}\right)^{\frac{1}{p_*(0)}}
\log\left(\frac{e}{t}\right)
^{\frac{\theta-1}{p_*(0)}};
\end{align*}
in the final inequality we used the fact that the exponent of the last log term
is negative.   Since $u_*(t)$ is bounded for $t_0<t\leq 1$, we
conclude that $u_* \in L^{p_+),\theta}([0,1])$.
\end{proof}

\bibliographystyle{plain}
\bibliography{grandsmallvar}

\end{document}